\newtheorem{lemma}{Lemma}
\newtheorem{theorem}{Theorem}
\newtheorem{conclusion}{Corollary}
\tikzset{main node/.style={circle,fill=blue!20,draw,minimum size=0.5cm,inner sep=0pt}}
\date{}
\author{E.Yu.~Lerner\footnote{This work was supported by the Russian Science Foundation (project N. 24-21-00158).}}
\title{The Heawood approach to Tait colorings and defining vertex sets}
\begin{document}
\maketitle
\begin{abstract}
Given a simple biconnected planar cubic graph, we associate each its vertex among $2n$ ones with the so-called spin, i.e., a variable which takes on values $\pm 1$. P.~J.~Heawood has proved that a Tait coloring, accurate to the choice of a color for one edge, is equivalent to the choice of spin values so as to make the sum of these value at vertices of any face be a multiple of~3. We treat faces, which satisfy this condition, as {\it proper}. 
The condition that guarantee the propriety of faces define a system of linear equations (SLE) with respect to variables, which take on nonzero values in the field~${\mathbb F}_3$. We say that a set of vertices is {\it defining} if values of spins of these vertices uniquely define values of the rest spins. In particular, so is the set of vertices which correspond to all free variables of the SLE. We actualize the approach proposed by P.~J.~Heawood by proposing a geometric proof of the fact that for a non-bipartite graph the rank of the SLE equals $n+1$. Moreover, we also geometrically describe the necessary condition for the minimality of the defining set. This implies that in the case of a non-bipartite graph there exist defining subsets consisting of $n-1$ vertices. As a simple corollary, we conclude that the number of Tait colorings in this case does not exceed $3\cdot 2^{n-1}$. Though this estimate is not exact, it is by half better than the known one. We also prove that the number of Tait colorings for a graph $CL_n$, which is bipartite for even $n$ and non-bipartite for an odd one, equals $2^n+8$ and $2^n-2$, correspondingly\footnote{See the Remark at the end part of the paper.}.
\end{abstract}

\textbf{Mathematics Subject Classifications:} 05C10, 05C15, 05C31,

\textbf{Keywords:} cubic graph, Tait coloring, Heawood vector, circular ladder graph.

\section{Introduction, main results}

Given a {\it simple biconnected planar cubic} graph $G=(V,E)$, we assume that the number of edges in it equals $3n$ and, correspondingly, the number of vertices equals $2n$, $n=2,3,\ldots$. The {\it Tait coloring} is a coloring of edges that enter in the set~$E$ in 3 colors in such a way that all edges with a common vertex are colored differently. Establishing the existence of such a coloring for any graph~$G$ is equivalent to proving the Four Color Theorem. Let us denote the number of various Tait colorings for the graph $G$ by the symbol $\chi’_3(G)$.

There are many papers devoted to Tait colorings. Let us briefly mention only works which are related to this paper and to the algebraic approach to constructing Tait colorings. In 1898, P.~J.~Heawood proved that constructing a Tait coloring is equivalent to finding a nonzero solution to a certain system of equations in the field~${\mathbb F}_3$. Moreover, he has stated inequalities for the rank of this system~(\cite{heawood}) and studied (in 1940s) various particular cases of this system. In 1967, in the book~(\cite{Ore}) devoted to the Four Color Problem, O.~Ore described in detail all known results, in particular, those obtained by P.~J.~Heawood. E.~G.~Belaga has generalized the approach proposed by P.~J.~Heawood for arbitrary Riemann surfaces, proposed its geometric interpretation. 
Following P.~J.~Heawood, E.~G.~Belaga has established that the fact of whether the graph is bipartite or not depends 
on the value (among two possible ones) of the rank of the Heawood system.

In pioneer papers~(\cite{DiscrAn,CombAs,POMI}), Yu.~V.~Matiyasevich introduces the notion of the graph discriminant and establishes the connection of its coefficients with the value $\chi’_3(G)$. Basing on this connection, he, in particular, expresses the value $\chi’_3(G)$ in terms of the conditional probability of the so-called parity of directions of edges of the graph $G$ with respect to their equivalence modulo~3. In the rather recent paper~(\cite{karpov}), D.~V.~Karpov introduces the notion of the defining set of edges of the graph~$G$ (the set, whose coloring uniquely defines the Tait coloring of the rest edges of the graph~$G$). He proves that the graph~$G$ has a defining set consisting of $n$ edges. This property has allowed Karpov to prove the estimate $\chi’_3(G)\leq  9\cdot 2^{n-2}$.

This paper is to some extent inspired by the latter work, but here we use a somewhat modernized variant of the classical Heawood approach. It seems that our approach is rather simple and clear.

Let us exactly describe results of this paper. Numerate colors used in the Tait coloring as $0,1,2$. We assume that they represent elements of the field ${\mathbb F}_3$. Denote its {\it nonzero elements} as ${\mathbb F}^*_3$:  ${\mathbb F}^*_3=\{1,2\}\equiv \{1,-1\}$. Evidently, by changing colors of all edges in a Tait coloring by a cyclic shift, namely, $x\to x+a$, $a\in{\mathbb F}_3$, we again get a Tait coloring. In other words, the set of all Tait colorings falls onto the {\it set of three-element classes}~$K(G)$ of colorings, which are obtainable from one another by a cyclic shift of colors.

Let us associate each vertex $v$, $v\in V$, of the graph under consideration with a spin (a variable) $\sigma(v)$ which takes on values in ${\mathbb F}^*_3$. Fix a certain set of all spin values. 
Let us call this set {\it proper for a face~$f$}, if
\begin{equation}
\label{eq:main_eq}
\sum_{v\in f} \sigma(v)=0.
\end{equation}
In such a case, for convenience, we will often use the term {\it a proper face}. 

\begin{theorem}[Heawood, 1898]
\label{th:heawood}
There exists a biunique correspondence between $K(G)$ and all possible sets of values of spins which are proper for all faces.
\end{theorem}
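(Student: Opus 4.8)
The plan is to read the correspondence off the local color pattern at each vertex, keeping the planar embedding fixed throughout. Given a Tait coloring $c\colon E\to\mathbb{F}_3$, the three edges at a vertex $v$ carry the colors $0,1,2$; listing them in the clockwise rotation at $v$ as $e_1,e_2,e_3$, we would set $\sigma_c(v):=c(e_2)-c(e_1)$. The first thing to check is that this is well posed: since $c(e_1)+c(e_2)+c(e_3)=0+1+2=0$, one also has $c(e_3)-c(e_2)=c(e_1)-c(e_3)=\sigma_c(v)$, so the value is independent of the starting point in the clockwise cycle, and it is nonzero because the three colors are distinct. As a cyclic shift $c\mapsto c+a$ leaves every difference $c(e')-c(e)$ unchanged, $\sigma_c$ depends only on $[c]\in K(G)$, and we obtain a map $\Phi\colon K(G)\to\{\text{spin assignments}\}$. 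To see that $\Phi$ lands among the proper assignments I would argue by a telescoping identity: traverse the boundary of a face $f$ coherently, meeting edges $e_1,\dots,e_k$ and corners $v_1,\dots,v_k$ with $e_{i-1},e_i$ the two edges of $f$ at $v_i$; since the local configuration is the same at every corner of $f$, one gets $c(e_i)-c(e_{i-1})=\varepsilon_f\,\sigma_c(v_i)$ for all $i$ with a single uniform sign $\varepsilon_f\in\{+1,-1\}$, and summing around the closed boundary the left side telescopes to $0$, whence $\sum_{v\in f}\sigma_c(v)=0$ in $\mathbb{F}_3$.

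Injectivity of $\Phi$ should follow from the observation that at $v$ the value $\sigma(v)$ together with the color of one incident edge determines the other two colors (add $\pm\sigma(v)$ in clockwise order). So if two Tait colorings $c,c'$ induce the same $\sigma$ and agree on an edge $e_0$, they agree on all edges at the endpoints of $e_0$, and then — $G$ being connected — on all of $E$; applying this to $c'$ and to $c+a$ with $a:=c'(e_0)-c(e_0)$ yields $c'=c+a$, i.e. $[c]=[c']$.

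For surjectivity, given $\sigma$ proper for all faces, the goal is a Tait coloring $c$ with $\sigma_c=\sigma$, equivalently a $c\colon E\to\mathbb{F}_3$ with $c(e_2)-c(e_1)=\sigma(v)$ for every clockwise-consecutive pair $e_1,e_2$ at every $v$. This is exactly the assertion that a certain $\mathbb{F}_3$-valued $1$-cochain $\phi$ on the line graph $L(G)$ — vertices the edges of $G$, edges the pairs of edges of $G$ sharing an endpoint — is the coboundary of a vertex-potential. Since $G$ is simple and cubic, $L(G)$ is the union of the $2n$ vertex-triangles $T_v$ glued at vertices, and $\phi$ is a coboundary iff it integrates to $0$ around a generating set of $Z_1(L(G))$. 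By Euler's formula $\dim Z_1(L(G))=6n-3n+1=3n+1$, and I would take as generating set the $2n$ (edge-disjoint) triangles $T_v$ together with the lifts to $L(G)$ of the $n+1$ bounded faces of $G$; justifying that these generate is the one spot needing a short lemma, provable by noting that a nonzero cycle of $G$ lifts to a cycle of $L(G)$ that at some vertex uses only part of the triangle $T_v$ and so cannot lie in $\mathrm{span}\{T_v\}$. Then $\phi$ evaluates to $\pm(1+1+1)\equiv 0\pmod 3$ on each $T_v$ and to $\varepsilon_f\sum_{v\in f}\sigma(v)=0$ on the lift of a face $f$ by propriety, so $\phi$ is a coboundary and produces the sought $c$: at each vertex its three colors are $c(e_1),c(e_1)+\sigma(v),c(e_1)-\sigma(v)$, all distinct, so $c$ is a Tait coloring, unique up to a global additive constant (as $L(G)$ is connected), i.e. up to cyclic shift, with $\sigma_c=\sigma$. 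Hence $\Phi$ is a bijection.

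The two points I expect to demand real care are the orientation bookkeeping — settling which of the three edges counts as ``first'' in the clockwise rotation and checking that a coherent traversal of a face gives the \emph{same} sign $\varepsilon_f$ at all of its corners, since one stray sign would break the telescoping — and the cycle-space lemma for $L(G)$ that certifies the triangles $T_v$ together with the face-lifts as a generating set of $Z_1(L(G))$.
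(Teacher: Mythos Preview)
Your argument is correct and essentially self-contained, but it proceeds quite differently from the paper. The paper does not compute on $L(G)$ at all; instead it invokes Tutte's duality between proper $k$-colorings (modulo a global shift) of a planar graph $H$ and nowhere-zero $\mathbb{Z}_k$-flows on $H^*$, applied with $H$ equal to the medial graph of $G$ (which, $G$ being cubic, coincides with the line graph). The dual $H^*$ is the bipartite ``double'' graph on $V^*\cup F^*$; orienting every edge from $V^*$ to $F^*$, the balance condition at $v^*\in V^*$ forces the three outgoing flow values in $\mathbb{F}_3^*$ to be equal, which \emph{is} the spin $\sigma(v)$, and the balance condition at $f^*\in F^*$ is exactly the propriety equation $\sum_{v\in f}\sigma(v)=0$. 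Thus the paper gets both directions and both conditions in one stroke from a known theorem, with no orientation bookkeeping and no cycle-space computation.

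What your route buys is independence from Tutte's theorem and an explicit formula for the bijection; the price is the sign analysis and the cycle-basis lemma you flag. On the latter, you can save yourself the ad hoc argument: since $L(G)$ \emph{is} the medial graph, it is itself planar, and its faces are precisely the $2n$ vertex-triangles $T_v$ together with the $n+2$ face-lifts of $G$; hence your proposed $3n+1$ generators are exactly the bounded faces of a connected planar graph, which generate $Z_1$ by the standard fact. With that observation in place, both the ``demanding'' points you anticipated become routine.
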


We understand the {\it Heawood vector} as a set of nonzero spin values, which is proper for all faces (see \cite{belaga1976,belaga1999}).
We say that a set of vertices, whose spin values uniquely define values of the rest spins in the Heawood vector, is {\it defining} (analogously to the definition of the defining set of edges proposed in~\cite{karpov}). In addition, we say that a defining set is {\it minimal}, if by deleting any vertex in it we turn it into a non-defining set.

If variables $\sigma(v)$ can take on any values in ${\mathbb F}_3$, then conditions~\eqref{eq:main_eq} define a system of $n+2$ linear homogeneous equations (SLE) with $2n$ variables. Indices of free variables in the SLE represent a defining set, but not necessarily the minimal one. In what follows, we study the structure of the set of indices of free variables.

By summing up all equations we get the equality $0=0$. Therefore, we can exclude at least one equation from the SLE. Let the symbol $F_{in}$ stand for the set of all internal faces of the graph~$G$. We treat the system of equations~\eqref{eq:main_eq} as the {\it main} SLE, only if $f\in F_{in}$.

Consider an arbitrary linear combination of equations in the main SLE. We treat the set of vertices that define indices of variables that enter in this combination with nonzero coefficients as the {\it support} of the linear combination. Let us describe all possible supports of the main system.

Fix an arbitrary cycle $C_0$ in the graph~$G$. It divides the plane onto two domains, namely, the external and internal ones. The latter domain contains only the subset of faces that enter in $F_{in}$. Denote the set of vertices of these faces without $V(C_0)$ by $V_{in}(C_0)$. Denote by $C_1, \ldots, C_k$ a set of disjoint cycles, $V(C_i)\subseteq V_{in}(C_0)$, $i=1,\ldots,k$; here the number $k$ of such cycles can take on any value, including zero.

In each cycle $C_i$, $i=0,\ldots,k$, let us choose an even number of vertices. Denote the set of all chosen vertices by the symbol $W$. Let $|W|=2m$. Consider the set of disjoint paths $P_j$, $j=1,\ldots,m$, all whose endpoints belong to $W$, while $V(P_j)\subseteq (V_{in}(C_0)\setminus \cup_{i=1}^kV(C_i))\cup W$ (i.e., all internal vertices of these paths belong to the domain located ``between'' the external cycle $C_0$ and internal cycles $C_i$, $i=1,\ldots,k$).

We understand a {\it zebra} as the set of vertices $\left(\cup_{i=0}^{k} V(C_i) \cup_{j=1}^{m} V(P_j)\right)\setminus W$, and we do the set $\left(V(C_0)\cup V_{in}(C_0)\right) \setminus \cup_{i=1}^k V_{in}(C_i)$ as the {\it zebra body}.

\begin{lemma}
\label{lem:zebra_why}
Any support of the main SLE represents the union of zebras with disjoint bodies.
\end{lemma}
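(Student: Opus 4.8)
The plan is to recast the lemma as a statement about $\mathbb{F}_3$-colourings of the faces and then to decompose the ``colour contour'' region by region. First I would translate the algebra into geometry. A linear combination of the main SLE is given by coefficients $a_f\in\mathbb{F}_3$ for $f\in F_{in}$; since the sum of all $n+2$ face equations is the trivial identity (every vertex lies on three faces and $3=0$ in $\mathbb{F}_3$), I may also set $a_{f_{out}}=0$. The coefficient of $\sigma(v)$ in the combination is then $\sum_{f\ni v}a_f$ over the three faces at $v$, so $v$ is in the support iff $\sum_{f\ni v}a_f\neq 0$; in $\mathbb{F}_3$ this happens exactly when two of the three values at $v$ coincide and the third differs (the sum vanishes when all three are equal, since $3x=0$, and when all three are distinct, since $0+1+2=0$). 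Call an edge \emph{active} when its two faces carry different values and let $H\subseteq G$ be the subgraph of active edges; going around $v$, the number of active edges among its three equals the number of colour changes in the cyclic triple of face values, which is $0$, $2$ or $3$ but never $1$. Hence every vertex of $H$ has degree $0$, $2$ or $3$, the support is precisely the set of degree-$2$ vertices of $H$, and the degree-$3$ vertices are those where the three faces carry three distinct values (call them \emph{trichromatic}).

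Next I would analyse the contour of a single monochromatic region. Partition the faces into monochromatic regions --- maximal unions of equal-valued faces joined across non-active edges --- with $f_{out}$ in the region $m_0$ of value $0$. For a monochromatic region $m$ of value $c$ I claim: (a) its boundary $\partial m$, the set of active edges separating $m$ from everything else, is a vertex-disjoint union of simple cycles; (b) on each such cycle the number of trichromatic vertices is even. For (a): a short case analysis of the value-pattern of the three faces at a vertex $v\in\partial m$ shows that in every case exactly two of the three edges at $v$ lie in $\partial m$ (two faces at $v$ with the same value are adjacent, hence already in the same region), so $\partial m$ has all vertices of degree $2$ and is a disjoint union of simple cycles. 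For (b): fix a boundary cycle $\gamma$ of $m$ (along which $m$ lies on a single, consistent side, the other side always containing the non-$m$ face); the value of that non-$m$ face is one of the two values $\neq c$ and it changes precisely at the trichromatic vertices of $\gamma$ --- a two-valued quantity read cyclically around $\gamma$ changes an even number of times.

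Finally I would assemble the zebras. For each monochromatic region $m$ of value $\neq 0$, form the zebra whose $C_0$ is the cycle of $\partial m$ bounding the unbounded complementary region, whose $C_1,\dots,C_k$ are the remaining (hole-bounding) cycles of $\partial m$ (simple and disjoint by (a), with $V(C_i)\subseteq V_{in}(C_0)$), whose $W$ is the set of all trichromatic vertices on $C_0\cup\dots\cup C_k$ (even in number on each $C_i$ by (b), as the definition requires), and with no connecting paths at all. Its vertex set is $\bigl(\bigcup_i V(C_i)\bigr)\setminus W$, which is exactly the set of degree-$2$ vertices of $H$ lying on $\partial m$, and its body is exactly the vertex set of the closed region $m$. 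The union of these zebras over all $m$ of nonzero value is the whole support: a support vertex is bichromatic with values $\{p,p,q\}$, $p\neq q$, at least one of $p,q$ nonzero, so it lies on the boundary of --- hence in the zebra of --- the monochromatic region of that nonzero value. Distinct monochromatic regions have disjoint interiors, so the bodies are pairwise non-overlapping (they may share at most boundary cycles). The degenerate shape of each zebra used here --- no paths, possibly $k=0$, possibly $W=\emptyset$ --- is allowed by the definition.

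I expect the real work to sit in the middle step: showing $\partial m$ is an honest disjoint union of simple cycles with no pinch points, which is where the hypotheses that $G$ is simple, cubic and $2$-connected are used, through the degree count; and the parity statement (b), which is the one place the ternary nature of $\mathbb{F}_3$ matters, since over $\mathbb{F}_2$ there would be no trichromatic vertices and the picture collapses. The first step is a routine local computation.
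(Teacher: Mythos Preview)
Your local analysis is sound: the characterisation of the support as the degree-$2$ vertices of the ``active-edge'' subgraph, the fact that $\partial m$ is $2$-regular, and the parity of trichromatic vertices along each boundary cycle are all correct and nicely argued. But the final assembly does not prove the lemma as stated, because your zebra bodies are \emph{not} disjoint in general.

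Concretely: take two adjacent faces $f_1,f_2$ sharing an edge $e$, set $a_{f_1}=1$, $a_{f_2}=-1$, and all other coefficients $0$. Your construction produces two zebras, one for the value-$1$ region $\{f_1\}$ and one for the value-$2$ region $\{f_2\}$; their bodies are $V(f_1)$ and $V(f_2)$, which share both endpoints of $e$. You flag this yourself (``they may share at most boundary cycles''), but the body is a \emph{vertex} set that includes the boundary cycles, so ``sharing a boundary cycle'' is exactly the failure of disjointness the lemma forbids. The same happens whenever a value-$1$ region and a value-$2$ region abut along an edge, which is the generic situation.

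The paper's proof avoids this by taking one zebra per connected component of the \emph{nonzero} region (all faces with coefficient $\pm 1$ together), and then using the paths $P_j$ to record the internal $+1/-1$ interfaces. In a cubic graph any two faces meeting at a vertex also share an edge, so two distinct nonzero components can never share a vertex; hence those bodies are genuinely disjoint. Your parity claim (b) is precisely what is needed to show such a component admits a consistent $2$-colouring---equivalently, that each $C_i$ carries an even number of path-endpoints---so the missing step is really just to merge adjacent nonzero monochromatic regions into a single zebra and reinstate the paths you discarded. Once you do that, your argument and the paper's coincide.
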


\begin{lemma}
\label{lem:zebra1}
A zebra can coincide with the empty set only if the graph~$G$ is a bipartite one.
\end{lemma}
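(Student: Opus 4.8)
\emph{Approach.} The plan is to read ``the zebra is empty'' as a statement about the kernel of the coefficient map of the main SLE and then transfer it to the planar dual. By the discussion preceding Lemma~\ref{lem:zebra_why}, a zebra is the support of a concrete linear combination $\lambda=\sum_{f\in F_{in}}a_f\cdot(\text{equation~\eqref{eq:main_eq} for }f)$ of the main system, and by the very construction of the zebra not all $a_f$ vanish (the coefficients realise the ``striped'' pattern on the non-empty region lying between $C_0$ and the inner cycles). If this support is empty, then $\lambda$ is the zero functional; adjoining $a_{f}:=0$ for the unique external face and using that $G$ is cubic (so every vertex lies on exactly three faces) together with the fact that summing all face equations yields $0=0$, we obtain a function $\phi=(a_f)_{f\in F}$ with $\sum_{f\ni v}\phi(f)=0$ for every $v\in V$, and $\phi$ is \emph{not constant}, since it vanishes on the external face but not identically. (Equivalently, one checks directly that an empty zebra forces every $C_i$ to be an even cycle and the paths $P_j$ to be single edges forming a perfect matching, i.e.\ $G$ is the edge-disjoint union of an even $2$-factor and a perfect matching; this combinatorial reformulation is illuminating but is not needed below.)

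\emph{Key steps.} First I would pass to the planar dual $T=G^{*}$. Since $G$ is cubic, every face of $T$ is a triangle, so $T$ is a triangulation of the sphere, and $\phi$ becomes a labelling of $V(T)$ by $\mathbb{F}_3$ whose values sum to zero on every triangular face. The elementary observation is that such a triangle is either \emph{monochromatic} or \emph{rainbow} (its three labels are $0,1,2$): two equal labels and a third different one would produce a non-zero sum. Next, around a fixed vertex $v$ of $T$ with link $u_1,\dots,u_d$ (cyclically ordered), the face relations give $\phi(u_{i+1})=-\phi(v)-\phi(u_i)$; hence as soon as one $u_i$ differs from $\phi(v)$ all subsequent ones do as well and they alternate between the two values distinct from $\phi(v)$. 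Consequently the triangles incident to $v$ are either all monochromatic or all rainbow, and in the rainbow case the alternation around the link can only close up if $\deg_T(v)$ is even. Since a triangle shared by two adjacent vertices of $T$ cannot be simultaneously monochromatic and rainbow, no edge of $T$ joins a ``monochromatic'' vertex to a ``rainbow'' one; as $T$ is connected, all of its vertices are of the same type.

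\emph{Conclusion.} If all vertices of $T$ are rainbow, then every vertex of $T$ has even degree, i.e.\ $T$ is Eulerian, which is equivalent to every face of $G$ having even length, i.e.\ to $G$ being bipartite, as required. If instead all vertices of $T$ are monochromatic, then every triangle of $T$ is monochromatic; since $T$ is connected and every edge of $T$ lies on a triangle, $\phi$ is constant on $T$, contradicting the non-constancy established in the first step. Therefore an empty zebra can occur only when $G$ is bipartite.

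\emph{Main obstacle.} The genuinely delicate point is the first step: one must make precise that an empty zebra corresponds to a \emph{non-trivial} linear combination of the equations of the main SLE (not merely to some combination), so that the induced $\phi$ is non-constant; this is exactly where the explicit description of a zebra as a support enters, and it is the part I expect to require the most care. A secondary technical nuisance is that when $G$ is not $3$-connected the dual $T$ may have multiple edges; this is handled by phrasing the vertex-link argument in terms of the triangular faces of $T$ incident to $v$ rather than in terms of abstract adjacency, which leaves every step above intact.
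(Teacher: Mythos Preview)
Your argument is correct, but it follows a genuinely different route from the paper's.

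The paper works entirely inside $G$. From the emptiness of the zebra it deduces (exactly as in your parenthetical remark) that $W=\bigcup_i V(C_i)=V(G)$, that each $P_j$ is a single edge, and hence that $E(G)$ decomposes as $E_C\cup E_P$ with $E_C=\bigcup_i E(C_i)$ and $E_P$ a perfect matching. It then observes that in the counterclockwise traversal of any face, an edge of $E_C$ must be followed by an edge of $E_P$ (``one has to turn left''), and conversely, since matching edges are non-adjacent, an $E_P$-edge is followed by an $E_C$-edge. Thus every face boundary alternates $E_C/E_P$, has even length, and $G$ is bipartite. This is a short, purely combinatorial, primal-side argument.

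Your proof instead lifts the empty-support relation to the planar dual triangulation $T=G^{*}$ and analyses the induced $\mathbb{F}_3$-labelling face by face; the ``monochromatic versus rainbow'' dichotomy around each vertex of $T$ is exactly the mechanism the paper alludes to (after Theorem~\ref{th:system}) as the ``less constructive'' dual/transpose approach of Heawood and Belaga, and it has the pleasant byproduct of exhibiting, in the rainbow case, a proper vertex $3$-colouring of $T$. The price is the step you yourself flag as delicate: you need the converse direction of Lemma~\ref{lem:zebra_why} (a zebra actually \emph{is} realised as the support of a nontrivial combination), which is implicit in the bijection established in that lemma's proof rather than in its statement. Once that is granted, the rest of your argument is clean. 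The paper's proof avoids this detour entirely by never leaving the geometric description of the zebra; your approach, on the other hand, makes the link to the rank statement of Theorem~\ref{th:system} and to the known ``$T$ is Eulerian iff $G$ is bipartite'' criterion more transparent.
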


Lemmas~\ref{lem:zebra_why} and~\ref{lem:zebra1} imply the following theorem.

\begin{theorem}
\label{th:system}
Let a graph~$G$ be not bipartite. In this case, all equations in the main SLE are linearly independent (the rank of the main SLE equals $n+1$). Variables in a certain set are linearly dependent if and only if their indices are defined by a set of vertices containing a zebra. In particular, any set of $n$ variables possesses this property.
\end{theorem}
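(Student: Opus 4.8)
The plan is to derive Theorem~\ref{th:system} as a formal consequence of Lemmas~\ref{lem:zebra_why} and~\ref{lem:zebra1}, spending most of the effort on bookkeeping rather than on new geometry. First I would recall the setup: the main SLE has $n+1$ equations (the faces in $F_{in}$), so its rank is at most $n+1$, and a nontrivial linear dependence among the equations is exactly a nonzero linear combination of the equations of~\eqref{eq:main_eq} over $F_{in}$ that equals the zero functional on $\mathbb{F}_3^{2n}$. By Lemma~\ref{lem:zebra_why}, the support of any such combination is a union of zebras with disjoint bodies; since the coefficient pattern on each vertex of a zebra is determined by its membership in the $C_i$'s and $P_j$'s (all coefficients are $\pm1$), a combination gives the zero functional only if \emph{every} zebra appearing in it is empty as a vertex set. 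By Lemma~\ref{lem:zebra1}, an empty zebra forces $G$ to be bipartite. Hence, when $G$ is non-bipartite, the only linear combination of the equations that vanishes identically is the trivial one, so the $n+1$ equations are linearly independent and the rank equals $n+1$. (I would note separately why the rank cannot exceed $n+1$: summing all $n+2$ original face equations gives $0=0$, which is the one dependency we quotiented out in passing to $F_{in}$; Euler's formula gives $|F|=n+2$ for our graphs.)

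Next I would treat the statement about linearly dependent variable sets. Let $S\subseteq V$ and let $I(S)$ be the corresponding set of column indices of the SLE. The columns indexed by $I(S)$ are linearly dependent precisely when there is a nonzero vector in the left kernel of the submatrix, equivalently a nonzero linear combination of the \emph{equations} whose support (as defined in the excerpt) is contained in~$S$. By Lemma~\ref{lem:zebra_why} any such support is a union of zebras with disjoint bodies; conversely, the construction preceding Lemma~\ref{lem:zebra_why} exhibits, for any single zebra $Z$, an explicit linear combination of face equations whose support is exactly $Z$ (sum the equations over the internal faces enclosed, with the appropriate $\pm$ signs coming from the cycle/path structure). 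So the columns of $S$ are dependent iff $S$ contains a nonempty zebra, which — rereading the theorem's phrasing — is what "their indices are defined by a set of vertices containing a zebra" means, once Lemma~\ref{lem:zebra1} rules out the empty zebra in the non-bipartite case.

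Finally, for the assertion that every set of $n$ variables has this property, I would argue by a rank/dimension count. The SLE has $2n$ variables and (in the non-bipartite case) rank $n+1$, so its solution space — the space of Heawood vectors over $\mathbb{F}_3$ — has dimension $2n-(n+1)=n-1$. Any set of $n$ variables therefore cannot be algebraically free: if all $n$ of them could be assigned independently, the solution space would have dimension at least $n$, contradicting $n-1$. Hence those $n$ columns are linearly dependent, and by the previous paragraph the corresponding vertex set contains a zebra. I would phrase this as: a defining set must have size at least $n-1$ because it must surject onto an $(n-1)$-dimensional space, and any set of size $\geq n$ is automatically redundant.

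The main obstacle I anticipate is purely expository: Lemma~\ref{lem:zebra_why} is stated as "any support \emph{represents} the union of zebras," but the reverse inclusion — that every zebra actually \emph{is} realized as a support by summing the enclosed internal-face equations with the right signs — is what powers the "if and only if" in the theorem, and I would need to make sure that direction is either contained in the proof of Lemma~\ref{lem:zebra_why} or spelled out here. Concretely, I must check that summing the equations~\eqref{eq:main_eq} over the internal faces bounded by $C_0$ (minus those bounded by the $C_i$), with signs dictated by a consistent orientation along each $P_j$, cancels exactly at the vertices of $W$ and at interior edges, leaving coefficient $\pm1$ on precisely the zebra vertices. The arithmetic that each interior edge's two endpoints receive opposite contributions, so that only the "boundary" pattern survives, is the one place where a sign/parity slip could creep in; everything else is linear algebra over $\mathbb{F}_3$ and the Euler-formula count $|F_{in}|=n+1$.
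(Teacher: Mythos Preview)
Your approach matches the paper's own proof, which is just two sentences: a nontrivial dependency among the $n+1$ equations would give a combination with empty support, hence an empty zebra, contradicting Lemma~\ref{lem:zebra1}; the remaining claims follow directly from the support description in Lemma~\ref{lem:zebra_why} (whose proof does establish the biunique correspondence you worry about, so the ``only if'' direction is available). One terminological slip to fix: ``the columns indexed by $I(S)$ are linearly dependent'' is \emph{not} equivalent to ``some nonzero row combination of the equations has support contained in $S$''---the first concerns the right kernel of the submatrix $A_{\cdot,S}$, the second the left kernel of the complementary submatrix $A_{\cdot,V\setminus S}$---and it is the second condition (your ``equivalently'' clause and your dimension count in the last paragraph) that the theorem and your argument actually use, so everything stands once you drop the column-dependence phrasing.
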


Actually, Heawood was aware of the first proposition of this theorem (see also~\cite{belaga1976,belaga1999}). It was proved in a less constructive but rather simple way by making use of properties of the rank of the system of equations, whose matrix is transposed with respect to the matrix of the system under consideration, and those of the classical triangulation graph dual to the considered graph~$G$. It was also noticed that in a bipartite case the rank of the main SLE equals~$n$. Note that the specificity of a bipartite case (see, for example, \cite[Proposition~6.4.2]{diestel}, this proposition 
was also stated by to P.~J.~Heawood) consists in the fact that in this and only this case, a cubic biconnected graph has an everywhere nonzero 3-flow. This means (see~\cite[Theorem 6.5.3]{diestel}) that in this and only this case, the dual triangulated graph has a proper vertex 3-coloring.

Let us return to studying sets of vertices. We can restate Theorem~\ref{th:system} as follows.

\begin{theorem}
\label{th:defset}
Indices of free variables in the main SLE for a non-bipartite graph form defining sets consisting of $n-1$ vertices. One can calculate values of the rest variables from the SLE; if they appear to be nonzero, then they define a Heawood vector. If a certain set of vertices of a non-bipartite graph contains a zebra, then it is not the minimal defining set. In particular, any set consisting of $n$ vertices of a non-bipartite graph is not the minimal defining set.
\end{theorem}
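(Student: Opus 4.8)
The plan is to read everything off from Theorem~\ref{th:system} and Lemma~\ref{lem:zebra1}, since Theorem~\ref{th:defset} is essentially their reformulation in the language of vertex sets, supplemented by one back-substitution step. \emph{Counting the free variables:} by Euler's formula a connected plane graph with $2n$ vertices and $3n$ edges has $n+2$ faces, so $|F_{in}|=n+1$ and the main SLE is a system of $n+1$ homogeneous equations in $2n$ unknowns over~$\mathbb{F}_3$. By Theorem~\ref{th:system} its rank equals $n+1$, hence a Gaussian reduction leaves exactly $2n-(n+1)=n-1$ free variables. As already noted in the introduction, the vertex set indexed by the free variables is defining; this yields the claimed defining set of cardinality $n-1$.

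\emph{Recovering a Heawood vector.} Assign arbitrary values in $\mathbb{F}_3^*$ to the $n-1$ free variables. Since the main SLE has full row rank $n+1$, the remaining $n+1$ variables are then uniquely determined in $\mathbb{F}_3$ by back-substitution. Assume these determined values all lie in $\mathbb{F}_3^*$. Then $\sigma\colon V\to\mathbb{F}_3^*$ satisfies~\eqref{eq:main_eq} for every internal face; and since the sum of the $n+2$ equations~\eqref{eq:main_eq} over all faces is the trivial identity $0=0$, condition~\eqref{eq:main_eq} for the outer face holds automatically. Thus $\sigma$ is proper for all faces, i.e., a Heawood vector, as claimed.

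\emph{Zebras and minimality.} Let $S\subseteq V$ contain a zebra; since $G$ is not bipartite, Lemma~\ref{lem:zebra1} guarantees that this zebra, and hence $S$, is nonempty. By Theorem~\ref{th:system} the variables indexed by $S$ are linearly dependent over the solution space of the main SLE: there are coefficients $c_v$, $v\in S$, not all zero, with $\sum_{v\in S}c_v\sigma(v)=0$ for every solution, in particular for every Heawood vector. Fix $v_0\in S$ with $c_{v_0}\neq0$. If two Heawood vectors agree on $S\setminus\{v_0\}$, the displayed relation together with $c_{v_0}\neq0$ forces them to agree at $v_0$ as well, hence on all of $S$; so if $S$ is defining, then so is $S\setminus\{v_0\}$, and $S$ is not minimal. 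If $S$ is not defining, it is not a minimal defining set for the trivial reason. Either way $S$ is not the minimal defining set. Finally, by Theorem~\ref{th:system} every set of $n$ vertices contains a zebra, so the preceding argument applies to it verbatim.

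The whole argument is bookkeeping once Theorem~\ref{th:system} is in hand; the only places that call for attention are the nonvanishing hypothesis in the second step --- together with the use of the global identity $0=0$ to dispose of the outer face --- and the logical reading of ``not the minimal defining set,'' which must be understood to cover sets that fail to be defining at all.
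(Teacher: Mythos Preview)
Your proof is correct and follows essentially the same route as the paper: both read the theorem off directly from Theorem~\ref{th:system}, using the rank count to get $n-1$ free variables and the ``zebra $\Rightarrow$ linear dependence'' direction to drop a vertex from any set containing a zebra. Your write-up is simply more explicit---you spell out the Euler-formula count, the outer-face identity $0=0$, and the case split on whether $S$ is defining---whereas the paper's proof dispatches these as ``follow from definitions.''
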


As a corollary of this assertion, we obtain the following inequality with respect to the number of Tait colorings.

\begin{conclusion}
\label{concl}
Any simple biconnected non-bipartite planar cubic graph $G$ with $2n$ vertices satisfies the estimate
\begin{equation}
\label{eq:chiGneq}
\chi’_3(G)\leq  3\cdot 2^{n-1}.
\end{equation}
\end{conclusion}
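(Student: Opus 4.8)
The plan is to read off the bound from the Heawood bijection together with the existence of a small defining set. First, recall that by Theorem~\ref{th:heawood} the set of all Tait colorings is the disjoint union of the three-element classes in $K(G)$, and $|K(G)|$ equals the number of Heawood vectors of $G$; since each class consists of exactly three distinct colorings (the three cyclic shifts $x\mapsto x+a$, $a\in\mathbb{F}_3$, are pairwise distinct because no color is fixed by a nonzero shift), we have $\chi_3(G)=3\,|K(G)|$. Hence it suffices to prove that a non-bipartite $G$ has at most $2^{n-1}$ Heawood vectors.

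Next I would invoke Theorem~\ref{th:defset}: because $G$ is non-bipartite, the indices of the free variables of the main SLE form a defining set $S$ with $|S|=n-1$. By the definition of a defining set, every Heawood vector is uniquely determined by the restriction of its spin values to $S$ (one computes the remaining spins from the SLE). Each of these $n-1$ spin values lies in $\mathbb{F}^*_3=\{1,-1\}$ and therefore has only two possibilities, so the number of admissible restrictions to $S$ — and a fortiori the number of Heawood vectors — is at most $2^{|S|}=2^{n-1}$. Combining this with $\chi_3(G)=3\,|K(G)|$ gives $\chi_3(G)\le 3\cdot 2^{n-1}$, which is exactly~\eqref{eq:chiGneq}.

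There is no real obstacle here; the work has all been done in Theorems~\ref{th:heawood} and~\ref{th:defset}, and what remains is bookkeeping. The only point deserving a remark is why the estimate need not be tight: an arbitrary assignment of values in $\mathbb{F}^*_3$ to the vertices of $S$ is not guaranteed to extend to a Heawood vector, since solving the SLE for the dependent variables may force some spin to take the forbidden value $0$. Thus $2^{n-1}$ counts a superset of the Heawood vectors and the inequality may be strict; this also pinpoints where any sharpening would have to come from — controlling how often the propagated values vanish — but that is not needed for the stated corollary.
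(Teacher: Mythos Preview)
Your argument is correct and follows essentially the same route as the paper: invoke Theorem~\ref{th:defset} to obtain a defining set of size $n-1$, bound the number of Heawood vectors by $2^{n-1}$ since each spin on the defining set lies in $\{1,-1\}$, and then multiply by $3$ via Theorem~\ref{th:heawood}. Your added remarks (why the three cyclic shifts are distinct, and why the bound need not be tight because propagated spins may vanish) are sound elaborations but do not change the underlying strategy.
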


According to results of computer-based calculations for small dimensions, the greatest value of $\chi’_3(G)$ with a fixed number of vertices $2n$ is attained for the {\it circular ladder graph} $CL_n$ that represents the direct product of a cycle $C_n$ consisting of $n$ vertices, $n>2$, and a single-edge path $P_2$. Evidently, this graph is bipartite for an even number~$n$ and non-bipartite for an odd one. Let us prove the following result with the help of Theorem~\ref{th:heawood}.

\begin{theorem}
\label{th:Cln}
The following formula is valid:
$$
\chi’_3(CL_n)=
\begin{cases}
2^n+8, \mbox{if $n$ is even;}\\
2^n-2, \mbox{if $n$ is odd.}
\end{cases}
$$
\end{theorem}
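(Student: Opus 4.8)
The plan is to apply Theorem~\ref{th:heawood} directly: instead of counting Tait colorings, I count Heawood vectors for $CL_n$, i.e.\ assignments $\sigma\colon V\to\{1,-1\}$ such that every face sum vanishes in $\mathbb{F}_3$, and then multiply by~$3$ to pass from three-element classes $K(G)$ to actual colorings (so the answer will be $3\cdot(\text{number of Heawood vectors})$, which forces the number of Heawood vectors to be $(2^n+8)/3$ or $(2^n-2)/3$ — a good internal consistency check to keep in mind). First I would set up coordinates: label the two $n$-cycles as $u_0,\dots,u_{n-1}$ (outer) and $w_0,\dots,w_{n-1}$ (inner), with rungs $u_iw_i$. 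The faces are the $n$ ``square'' faces $Q_i=\{u_i,u_{i+1},w_{i+1},w_i\}$ (indices mod $n$) together with the two $n$-gon faces $\{u_0,\dots,u_{n-1}\}$ and $\{w_0,\dots,w_{n-1}\}$; note the square faces already sum to the two big faces, so by Heawood/Theorem~\ref{th:system} it suffices to impose the $n$ square conditions $\sigma(u_i)+\sigma(u_{i+1})+\sigma(w_i)+\sigma(w_{i+1})=0$ for all $i$ (with one redundancy in the even case, none in the odd case, matching ranks $n$ vs.\ $n+1$).

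Next I would solve this recurrence. Writing $a_i=\sigma(u_i)$, $b_i=\sigma(w_i)$ with $a_i,b_i\in\{1,-1\}$, the condition is $(a_i+b_i)+(a_{i+1}+b_{i+1})=0$ in $\mathbb{F}_3$. Put $s_i=a_i+b_i\in\{-2,0,2\}\equiv\{1,0,-1\}$ in $\mathbb{F}_3$; then the condition is $s_i=-s_{i+1}$ for all~$i$, so the sequence $(s_i)$ alternates $t,-t,t,-t,\dots$ around the cycle. If $n$ is odd this forces $s_i\equiv 0$ for all $i$, i.e.\ $a_i+b_i=0$ over $\mathbb{F}_3$, which for $\pm1$ values means $b_i=-a_i$; hence every Heawood vector is determined by the free choice of $a_0,\dots,a_{n-1}\in\{1,-1\}$, giving $2^n$ vectors — but wait, that overcounts: I must recheck whether $s_i=0$ in $\mathbb{F}_3$ with $a_i,b_i\in\{1,-1\}\subset\mathbb{F}_3$ really forces $b_i=-a_i$. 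Indeed $1+1=2\ne0$, $1+(-1)=0$, $(-1)+(-1)=-2=1\ne0$, so $s_i=0\iff b_i=-a_i$; so there are exactly $2^n$ solutions. That is too many, so the $\pm1$-integer distinction must re-enter: the subtlety is that the value $s_i=0$ in $\mathbb{F}_3$ is the only one compatible with $\{1,-1\}$-entries \emph{summing to $0$}, whereas $s_i=\pm1$ in $\mathbb{F}_3$ would require $a_i+b_i\in\{2,-2\}$, i.e.\ $a_i=b_i$. So for $n$ odd the count is genuinely $2^n$ Heawood vectors? That contradicts $(2^n-2)/3$; the resolution is that I have mis-stated which faces to use, or the $\pm1$ constraint interacts with the \emph{big}-face conditions after all in a way the reduction hid. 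The hard part — and the step I expect to be the main obstacle — is precisely this bookkeeping: correctly identifying which $\{1,-1\}$-assignments satisfy all face conditions simultaneously, being careful that ``$\sigma(v)\in\{1,-1\}$'' is a constraint in $\mathbb{Z}$ (or rather a restriction of the values), not merely in $\mathbb{F}_3$, so that the big $n$-gon faces may cut down the na\"\i ve $\mathbb{F}_3$-count.

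Thus the refined plan: enumerate by cases on the pattern of $s_i$. For $n$ even, $(s_i)=(t,-t,t,\dots,-t)$ with $t\in\{0,1,-1\}$; the case $t=0$ gives $b_i=-a_i$ for all $i$ with $a_i$ free, subject to the remaining big-face condition $\sum a_i\equiv0$ (one of the two big faces, the other being dependent) — count the $\pm1$ vectors of length $n$ with sum $\equiv0\pmod 3$ — while the cases $t=\pm1$ give a small finite family (forcing $a_i=b_i$ on alternate indices, $b_i=-a_i$ on the others, heavily constrained), contributing the ``$+8$''. For $n$ odd, only $t=0$ survives, giving $\pm1$ vectors $a$ with an appropriate sum condition, and the count works out to $(2^n-2)/3$ Heawood vectors, i.e.\ $2^n-2$ colorings. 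The concrete combinatorial lemma I would isolate and prove by a roots-of-unity / generating-function argument is: the number of $x\in\{1,-1\}^n$ with $\sum x_i\equiv r\pmod3$ equals $\tfrac13\bigl(2^n+2\cdot(-1)^r\cdot\mathrm{Re}(\omega^{?}(1+\omega)^n)\bigr)$ type formula; since $(1+\omega)$ and $(1+\bar\omega)$ are primitive sixth roots of unity times $\sqrt3$... I would just compute $(1+\omega)^n$ explicitly using $1+\omega=-\omega^2=e^{i\pi/3}$, so $(1+\omega)^n=e^{in\pi/3}$ has period $6$ in $n$, yielding clean closed forms. Assembling the case counts, multiplying by $3$, and checking parity of $n$ against $n\bmod 6$ gives exactly $2^n+8$ (even) and $2^n-2$ (odd); the footnoted Remark presumably addresses the small-$n$ or degenerate cases (e.g.\ $n=3$, where $CL_3=K_{3,3}$'s complement / the triangular prism, or multi-edge issues) where the formula needs a caveat.
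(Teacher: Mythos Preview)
Your refined plan is exactly the paper's proof: split according to whether $\sigma(u_i)=\sigma(w_i)$ for some $i$ (your $t\neq 0$) or $\sigma(u_i)=-\sigma(w_i)$ for all $i$ (your $t=0$), and in the latter case count $\pm1$-sequences of length $n$ with $\sum x_i\equiv 0\pmod 3$, obtaining $(2^n+2)/3$ for even $n$ and $(2^n-2)/3$ for odd $n$; then multiply the total number of Heawood vectors by~$3$.

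Two points to clean up before this becomes a proof. First, your opening claim that ``the square faces already sum to the two big faces'' is false as stated: summing the $n$ square equations yields only the \emph{sum} of the two $n$-gon equations, not each one separately, so the big-face condition $\sum_i a_i\equiv 0$ is a genuine extra constraint that must be imposed from the outset (you discovered this empirically, but the diagnosis ``the $\pm1$ constraint interacts with the big-face conditions in a way the reduction hid'' is not quite it --- the big-face equation was simply never implied). Second, your description of the $t\neq 0$ case is garbled: since both $t$ and $-t$ are nonzero in $\mathbb{F}_3$, you get $a_i=b_i$ at \emph{every} index (not ``on alternate indices''), with the common value alternating $+1,-1,+1,\dots$; for even $n$ this yields exactly $2$ Heawood vectors (and the $n$-gon sums vanish automatically), contributing $+6$ to the coloring count, while the remaining $+2$ in ``$+8$'' comes from the $(2^n+2)/3$ term.
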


Therefore, the limit value of the obtained estimate~\eqref{eq:chiGneq} cannot be improved more than by half (in a general case, one can try to improve it up to the bound $2^n+\text{const}$\footnote{This result was obtained by M.~P.~Ivanov, see Remark at the end part of the paper.}).

Let us illustrate notions used in this paper with the help of one simple biconnected planar cubic graph with 6 vertices, namely, i.e., the graph $CL_3$ (see Fig.~\ref{pic:1}).
Since it is three-connected, the set of its faces is independent of the graph embedding.

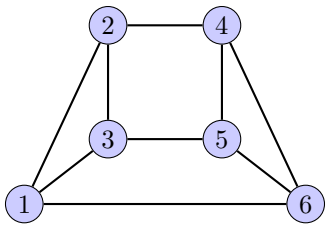
\begin{figure}[h]
\begin{center}
\begin{tikzpicture}
    \node[main node] (2) {$2$};
    \node[main node] (4) [right = 1cm  of 2]  {$4$};
    \node[main node] (3) [below = 1cm  of 2] {$3$};
    \node[main node] (5) [right = 1cm  of 3] {$5$};
    \node[main node] (1) [below left = 0.5cm and 0.75cm of 3]  {$1$};
    \node[main node] (6) [below right = 0.5cm and 0.75cm of 5]  {$6$};

    \path[draw,thick]
    (1) edge node {} (2)
    (1) edge node {} (3)
    (1) edge node {} (6)
    (2) edge node {} (3)
    (2) edge node {} (4)
    (3) edge node {} (5)
    (4) edge node {} (5)
    (4) edge node {} (6)
    (5) edge node {} (6)
        ;
\end{tikzpicture}
\caption{\label{pic:1} The graph $CL_3$ and its faces.}
\end{center}
\end{figure}

One can easily prove that for this graph there exist two Heawood vectors
$$
(\sigma(1),\ldots, \sigma(6))=(1,1,1,-1,-1,-1)\quad \mbox{and }
(-1,-1,-1,1,1,1).$$ 
These two vectors correspond to two classes of Tait colorings. In the first case, numbers of colors ${0,1,2}\in{\mathbb F}_3$ of edges, which are incident to vertices $1,2,3$, increase by $1$ when bypassing counterclockwise. At the same time, colors of edges, which are incident to vertices $4,5,6$, decrease by $1$ when bypassing counterclockwise. In the second case, the opposite is true. One can easily define Tait colorings from these conditions, no other coloring exists.

Let us make sure that the set of vertices $\{1, 6\}$ forms a zebra. Consider the cycle $C_0=(124653)$; it confines a domain composed of faces $(123)$, $(2453)$, and $(465)$, $V_{in}(C_0)=\emptyset$; let us construct paths $(2,3)$ and $(4,5)$. 
Let us make sure that the set of vertices $\{2,4\}$ also forms a zebra. To this end, choose the cycle $(123546)$ and construct paths $(1,3)$ and $(5,6)$.
The set of vertices $\{3,5\}$ also forms a zebra. In this case, we choose the cycle $C_0=(1246)$, $V_{in}(C_0)=\{3,5\}$, and paths $(1,3,2)$ and $(4,5,6)$. There exists no other two-element zebra.

Any subset consisting of three vertices of this graph contains a zebra. In particular, each vertex set $\{1,2,5\}$ and $\{3,4,6\}$ is a zebra; we can get them by using the same cycle $C_0=(1246)$ and considering only one path among two ones, namely, $(4,5,6)$ and $(1,3,2)$. Each vertex set $\{1,3,4\}$ and $\{2,5,6\}$ is a zebra.
For example, $\{1,3,4\}$ is a zebra, because it represents the difference of vertices that enter in the cycle $(13546)$ and the path $(5,6)$. Finally, each of sets $\{1,4,5\}$ and $\{2,3,6\}$ is a zebra; we can obtain them from cycles $C_0=(12453)$ and $C_0=(24653)$, correspondingly. All the rest three-element subsets of vertices either contain two-element zebras $\{1,6\}$, $\{2,4\}$, $\{3,5\}$, or coincide with one of cycles $(123)$ or $(465)$.

Therefore, in this case, the set of free variables for the main SLE consists of all two-element sets indexed with graph vertices different from $\{1,6\}$, $\{2,4\}$, $\{3,5\}$. Correspondingly, the defining set considered in Theorem~\ref{th:system} is any of sets consisting of two vertices which differs from $\{1,6\}$, $\{2,4\}$, $\{3,5\}$. In addition, no three-element set of vertices is minimal defining.

Note that in Theorem~\ref{th:system} one does not consider the question of the minimality of defining sets consisting of $n-1$ elements, which correspond to indices of free variables in the main SLE. 
According to the adduced example, they are not necessarily minimal, namely, in the given example, the Heawood vector is uniquely defined from the value of any spin. In Conclusion, we discuss the exactness of estimate~\eqref{eq:chiGneq} in connection with minimal defining sets.

The next part of the paper has the following structure.
In Section~2.1, we give a rather natural, in our opinion, short proof of the Heawood theorem (cf. with the proof of this result proposed in~\cite{heawood,belaga1999}). The idea of our proof is close to that proposed in book~\cite{Ore}. In Section~2.2, we prove Lemma~\ref{lem:zebra_why}.
Section~2.3 is devoted to the proof of Lemma~\ref{lem:zebra1}, theorems~\ref{th:system} and~\ref{th:defset}, and Corollary~\ref{concl}.
In Section~2.4, we prove Theorem~\ref{th:Cln}.
In Conclusion, we summarize the obtained results and discuss unsolved questions.

\section{Proofs}
\subsection{The correspondence between Tait colorings and Hea\-wood vectors.}
The proof of Theorem~\ref{th:heawood} proposed by us is based on two very well-known facts. The first one is related to the correspondence between proper colorings and everywhere nonzero flows (see, for example, \cite[chapter 6]{diestel}).

Consider a standard proper coloring of vertices of a simple graph~$H=(V,E)$ in colors that belong to the set ${\mathbb Z}_k$, i.e., the {\it additive group of integer modulo~$k$}. Let the symbol $c(v)$ denote the color of the vertex~$v$, $v\in V$. Note that the set of proper colorings falls onto $k$-element classes of colorings, which can be obtained one from another by a cyclic shift of colors.

Let $G’=(V',E')$ be a pseudograph. Let us arbitrarily direct its edges $E'$. Recall that an {\it everywhere nonzero $k$-flow} in a graph~$G’$ is a correspondence between each directed edge $e \in E'$ with a nonzero element of the considered group $g(e)$ such that for any vertex $v\in V'$ the sum $g(e)$ calculated over all edges $e$ originating at $v$ equals the sum $g(e)$ calculated over all edges $e$, ending at~$v$. We call this requirement the {\it balance condition} at the vertex~$v$.

There exists a biunique correspondence between classes of proper colorings of a planar graph~$H$ and everywhere nonzero flows in the dual graph $H^*$~\cite{tutte1953}. We can describe the idea of this correspondence more simply in the following way. Assume that an edge $e^*$ of the graph $H^*$ is dual to the edge~$e=xy$ of the graph~$H$, while the vertex~$x$ is located to the left of the edge~$e^*$, and the vertex $y$ is located to the right of the edge~$e^*$, taking into account its directionality. Put $g(e^*)=c(x)-c(y)$ (evidently, this difference is independent of the choice of a representative of the class of equivalent colorings).

The second fact is related to the structure of the graph that is dual to the medial graph of a planar graph~$G=(V,E)$. Recall that the set of vertices of the {\it medial graph} $H$ is formed by edges $E$ of the initial graph, which are incident if and only if they are neighboring edges in one and the same face.
The graph $H^*$ dual to it represents a bipartite graph, which is called a {\it double graph} (see~\cite[Theorem 3.5.4.]{Ore}, in the book by Ore it is called a {\it radial} graph). Its vertices $V^*\cup F^*$ correspond to vertices in the set $V$ of the initial graph~$G$ and its faces~$F$. These two subsets of vertices form two parts of the graph~$H^*$. There exists an edge between $v^*$, $v^*\in V^*$, and $f^*$, $f^*\in F^*$ if and only if the vertex~$v$ belongs to the face~$f$.

\begin{proof}[Proof of Theorem~\ref{th:heawood}] By definition, a Tait coloring of a planar cubic graph is a proper coloring of its medial graph (usually, in such cases, one means the line graph, but for a cubic graph these two notions coincide). Consequently, there exists a biunique correspondence between Tait colorings and everywhere nonzero flows on a double graph.

Let us direct all edges of this bipartite graph from vertices in $V^*$ to those in $F^*$. Then each its vertex $v^*\in V^*$ is the origin of 3 edges $e_1$, $e_2$, $e_3$, while $g(e_i)\in {\mathbb F}^*_3$ and $\sum_{i=1}^3 g(e_i)=0$. Therefore, $ g(e_1)=g(e_2)=g(e_3)\in\{1,-1\}$.

Let us establish a biunique correspondence between everywhere nonzero flows in a  double graph and sets of spins $\sigma(v)$, $\sigma(v)\in\{1,-1\}$. Put $\sigma(v)=g(e_i)$, where $e_i$ is an (arbitrary) edge originating at the corresponding vertex $v^*$ of the double graph. For establishing the biunique correspondence, it suffices to additionally impose the balance condition for the nonzero flow in the double graph for vertices $f^*\in F^*$. In terms of~$\sigma(v)$ this condition takes form~\eqref{eq:main_eq}, $f\in F$.
\end{proof}

Note that if the planar cubic graph $G$ itself is bipartite, then one can easily find the Heawood vector. To this end, it suffices to put $\sigma(v)=1$, if $v$ belongs to the first part of $V(G)$, and do $\sigma(v)=-1$ otherwise. However, the bipartite case of a cubic graph~$G$ is not interesting relative to the problem of Tait colorings. As we mention in Introduction, in the bipartite case, the graph $G$ has an everywhere nonzero 3-flow and, correspondingly, a proper coloring of vertices of the graph~$G^*$, which is dual to $G$, in three colors. We can also easily construct the Tait coloring without the use of the Heawood theorem. A simple way to do this consists in coloring edges in the perfect match of the graph~$G$ in color~0, and doing the rest edges, which represent the set of cycles of an even length, alternately, in colors~1 and~2.

\subsection{Supports of all possible linear combinations of equ\-ations in the main SLE.}
\begin{proof}[Proof of Lemma~\ref{lem:zebra_why}]
Each equation in the SLE corresponds to its face, whose interiority in the planar embedding of the graph consists of points in ${\mathbb R}^2$ confined by a closed Jordan curve. For convenience, we visualize the linear combination by ``coloring’’ these points in two distinct colors in dependence of whether the corresponding equation enters in the considered linear combination with the coefficient $+1$ or $-1$. If the equation does not enter in the linear combination, then the interiority of the face remains non-colored. As a result, we get a colored 
bounded part of the plane, which falls onto several connected colored domains. Let us consider one of them.

By definition, the external boundary of a domain represents some cycle $C_0$. At the same time, the internal boundary of the domain (if exists) represents the set of disjoint cycles $C_1, \ldots, C_k$, which belong to $V_{in}(C_0)$. Let us ``merge’’ the neighboring faces of the same color and consider only the boundary between differently colored domains. In what follows, for convenience, we treat the whole union of a connected colored set of domains (the subset of points in ${\mathbb R}^2$ under consideration) as the {\it zebra body}.

Boundaries between differently colored domains in the zebra body represent disjoint paths of the graph which begin and end at vertices $C_i$, $i=0,\ldots,k$. Moreover, these paths have no other common vertices with these cycles except for their endpoints.

However, disjoint paths are not necessarily domain boundaries. To this end, it is necessary that one should be able to color the zebra body in two colors. In order to establish the necessary and sufficient conditions for this property, we connect one more planar graph with the zebra body divided by mentioned paths onto domains. Vertices of this multigraph $G’$ are domains obtained after constructing paths, while the edge in the graph~$G’$ corresponds to the common path between two domains. As is known, the graph $G’$ is bipartite, if and only if the dual to it pseudograph~$(G’)^*$ is the Euler one. Its vertices are cycles $C_i$, and edges are paths which connect these cycles. Therefore, if the symbol $W$ stands for the set of endpoints of our paths, then the zebra body is two-colorable if and only if the number of vertices in sets $W\cap V(C_i)$, $i=0,\ldots,k$m is even.

Thus, we have established a biunique correspondence between all possible linear combinations of equations in the main SLE and colored in two colors disjoint bodies of zebras mentioned in the Lemma assumption. Let us now study properties of the support of the corresponding linear combination. As above, we restrict ourselves to considering vertices which belong to one zebra body. Recall that the symbol $P_j$, $j=1,\ldots,m$, stands for the set of mentioned paths. Put 
$Z_1=\cup_{j=0}^k V(C_i)\setminus W$ and
$Z_2=\cup_{j=1}^m V(P_j)\setminus W$.

Each vertex $v$ under consideration belongs to one of the following four categories:\\
1) $v\in W$;\\
2) $v\in Z_1$;\\
3) $v\in Z_2$;\\
4) the rest internal vertices of the zebra body.

Evidently, in a cubic planar graph each vertex belongs to 3 faces. Therefore, in the latter case under consideration, the vertex corresponds to the index of the variable that enters in 3 summed equations in the main SLE with one and the same coefficient. This variable does not enter in the sum in the field ${\mathbb F}_3$. In the first case, the corresponding variable enters in 2 equations in the SLE and has opposite signs in the sum, therefore it neither enters in the linear combination. In case~2), the vertex $v$ corresponds to the index of the variable that enters in 1 or 2 equations in the SLE with one and the same coefficient. Finally, in case~3), the corresponding variable enters in 3 equations with nonzero coefficients $\pm 1$, while coefficients do not necessarily have one and the same sign. Therefore, the support of the linear combination in the body of the zebra is the set $Z_1\cup Z_2$, which was to be proved.
\end{proof}

\subsection{The rank of the matrix of the main SLE and defining sets of vertices.}
\begin{proof}[Proof of Lemma~\ref{lem:zebra1}]
Assume that a zebra is the empty set of vertices. In this case, the set $W$ of endpoints of paths coincides with $\cup_{i=0}^k V(C_i)$, while paths themselves have no internal vertices, i.e., they consist of one edge. Note that the set of edges of paths and cycles contains all 3 edges of any vertex $w\in W$. This means that the graph induced by the vertex set $W$ (i.e., the graph with the vertex set $W$ and all edges of the graph~$G$, whose endpoints belong to $W$) coincides with the graph $G$.

We denote the set of edges $\cup_{i=0}^k E(C_i)$ by the symbol $E_C$ and we do the set of edges $E(G)\setminus E_C$ by $E_P$. 
In other words, $E_P=\cup_{j=1}^m E(P_j)$.
Let us consider the counterclockwise bypass of edges of an arbitrary face of the graph $G$.

In this bypass, any edge $e_C$ in $E_C$ should be followed by an edge $e_P$ in $E_P$, i.e., we cannot go further in this cycle, because ``we have to turn to the left’’. Visa versa, an edge $e_P$ in $E_P$ should be followed by an edge in $E_C$, because edges in $E_P$ are not adjacent. Consequently, any face of the graph~$G$ is formed by an even number of edges, therefore, the graph~$G$ is bipartite.
\end{proof}

Note that results obtained in papers~\cite{heawood,belaga1976} in view of Theorem~\ref{th:system} imply that the bipartite character of a graph is not only the necessary condition for the existence of the empty zebra, but also a sufficient one. 

\begin{proof}[Proof of Theorem~\ref{th:system}] If a system consisting of $(n+1)$ equations is degenerate, then there exists a linear combination with nonzero coefficients of equations of the SLE, which leads to the tautology $0 = 0$. In this case, the zebra coincides with the empty set, which contradicts Lemma~\ref{lem:zebra1}. The rest assertions of the theorem are also evident, because due to properties of the support of the main SLE one of variables, whose index belongs to the support, can be expressed in terms of the rest variables with the corresponding indices.
\end{proof}

\begin{proof}[Proof of Theorem~\ref{th:defset}]
If in some defining set of vertices a non-bipartite graph contains a zebra, then, as was mentioned above, we can express one of variables, whose index belongs to the zebra, in terms of the rest ones. Consequently, by deleting this variable we also get a defining set of vertices of the initial graph. This contradicts the minimality of the initial defining set. The rest assertions also follow from definitions.
\end{proof}

\begin{proof}[Proof of Corollary~\ref{concl}]
The above theorem implies that there exists a defining set consisting of $n-1$ vertices. We can choose values of its spins in $2^{n-1}$ ways. Values of all the rest spins in the Heawood vector (if they exist) are defined uniquely. Therefore, the number of Heawood vectors does not exceed $2^{n-1}$. Consequently, by Theorem~\ref{th:heawood} the number of all possible Tait colorings does not exceed $3\times 2^{n-1}$.
\end{proof}

\subsection{The number of Tait colorings for a circular ladder graph.}
\begin{proof}[Proof of Theorem~\ref{th:Cln}]
A circular ladder graph has $n$ faces formed by 4 vertices and 2 faces formed by $n$ vertices. Denote vertices of the latter faces by symbols $v_1,\ldots,v_n$ and $w_1,\ldots,w_n$, synchronizing the numeration of vertices of these faces so as to make faces formed by 4 vertices take the form $(v_i,w_i,w_{i+1},v_{i+1})$, $i=0,\ldots,n-1$, where $v_0\equiv v_n$ and $w_0\equiv w_n$. For example, in the case of the graph $CL_3$ shown in Fig.~\ref{pic:1}, one should replace vertex numbers 1,2,3,4,5,6 with $v_1, v_2, v_3, w_2, w_3, w_1$, correspondingly.

Let us calculate the number of Heawood vectors. Let us first consider the case, when there exists a pair of spin values such that $\sigma(v_i)=\sigma(w_i)$ for some $i$. 
Then the propriety conditions for faces formed by 4 vertices imply the equality $\sigma(v_{i+1})=\sigma(w_{i+1})=-\sigma(v_i)$ and so on. We conclude that with even~$n$ there exist exactly 2 such Heawood vectors with opposite signs. With odd $n$, no such Heawood vector exists.

Let us now consider the case, when $\sigma(v_i)=-\sigma(w_i)$ for each $i=1,\ldots,n$. 
Then the propriety conditions for faces formed by 4 vertices is fulfilled automatically. The problem is reduced to finding the number of sets of spins which are proper for the face $(v_1,\ldots,v_n)$. In other words, we need to calculate the number of sequences $(x_1,\ldots,x_n)$, $x_i\in {\mathbb F}^*_3$, such that $\sum_{i=1}^n x_i=0$. By induction with respect to $n$ we can easily prove that the number of such sequences for even~$n$ equals $(2^n+2)/3$ (and, concurrently, the number of sequences such that $\sum_{i=1}^n x_i=1,2$, equals $(2^n-1)/3$); at the same time, for odd~$n$ the number of such sequences equals $(2^n-2)/3$ (and, concurrently, the number of sequences such that $\sum_{i=1}^n x_i=1,2$, equals $(2^n+1)/3$).

As a result, we conclude that the number of Heawood vectors equals $(2^n+2)/3+2$ for even $n$ and it does $(2^n-2)/3$ for odd~$n$. By Theorem~\ref{th:heawood} for calculating the number of Tait colorings we should multiply this value by~3.
\end{proof}

\section{Conclusion}
In the approach proposed by P.~J.~Heawood, the Tait coloring problem is reduced to the search of an everywhere nonzero solution to a system of $n+1$ linear equations in ${\mathbb F}_3^{2n}$. Equations in this system have a simple geometric sense. This allows one to prove that the rank of the system equals $n+1$ in a non-bipartite case and to establish a visual criterion for describing the set of free variables of the system. Correspondingly, one can easily deduce the inequality for the cardinality of the defining set of vertices. 
However, the requirement that a solution should be everywhere nonzero restricts the application of standard methods of linear algebra.

The graph $CL_3$ (the unique planar biconnected cubic graph with 6 vertices) considered at the end part of the Introduction illustrates the fact that the estimate $\chi’_3(G)\leq  3\cdot 2^{n-1}$ is exact for any~$n$. One can easily strengthen it, provided that $m(G)$ {\it the cardinality of a minimal defining set} in the planar graph~$G$ is known. 
Really, 
from Theorem~\ref{th:heawood} 
we get the estimate $\chi’_3(G)\leq  3\cdot 2^{m(G)}$. However, this estimate neither is exact for any graph~$G$, because $\chi’_3(G)/3$ is not necessarily a power of $2$. Nevertheless, we can assume that for any~$n$ there exists a planar biconnected cubic graph~$G$ with~$2n$ vertices such that $\chi’_3(G)= 3\cdot 2^{m(G)}$.

According to traditions related to the proof of the existence of a Tait coloring, it makes sense to study possible counterexamples of graphs with minimal numbers of vertices in order to prove their absence. Such graphs are said to be {\it irreducible}. The considered interpretation of the Tait coloring evidently implies that the mentioned graph can contain no triangular face. 
Really, the propriety condition implies that all its vertices should have spins of one and the same sign. We can contract vertices to one point with the spin of the opposite sign. 
We can easily prove that the propriety condition for faces of the new graph and the initial one turn into each other.

However, we can easily prove the absence of triangles in the nonreducible graph, as well as four-sided faces~(see, for example,~\cite{Ore}). It is more interesting to learn how to ``delete'' faces with 5 sides, which necessarily exist in an irreducible graph. There also exists a correspondence between the set of values of spins in the graph with a concrete five-sided face (let us denote it as $ABCDE$) and without such a face. We can establish this correspondence in the following way. Without loss of generality, we can assume that if the face $ABCDE$ is proper, then the spin that corresponds to the vertex $C$ equals $-1$, while for all the rest vertices the spin value equals $+1$. Let us delete the edge $AE$ from this graph and contract together vertices $A$ and $B$, as well as vertices $D$ and~$E$. Let us define the spin of $-1$ for new (contracted) vertices. Let us concurrently change the spin value for the vertex~$C$ for the opposite one, i.e., $+1$. Then we get a new cubic graph, all whose faces are proper (one can easily verify this property). However, in this case, as distinct from the case of a triangular face, the correspondence between sets of spin values is not biunique. Really, spins at contracted vertices in the new graph have one and the same sign, while the spin at the vertex $C$ located between them has the opposite sign. However, the latter property is not necessarily fulfilled, if the new graph allows the existence of the Heawood vector.

Possibly, the approach, which implies the use of the Fourier transform of the Kronecker $\delta$-function (that indicates the fulfillment of condition~\eqref{eq:main_eq}) and the application of modified methods for the $\alpha$-representation (see~\cite{ejc}), will prove to be more efficient. We are going to try it out in future.

\textbf{Remark.} After completing this paper, the author has noticed the work~\cite{ivanov} by M.~P.~Ivanov. In the mentioned paper, in particular, similarly to Theorem~\ref{th:Cln}, M.~P.~Ivanov calculates the value $\chi'_3(CL_n)$. To this end, he uses another technique, which is more direct but more difficult (it requires the introduction of the notion of a ``garland''). He also calculates that $\chi'_3(M_{2n})=2^n+4$, where $M_{2n}$ is the graph of the so-called M\"obius ladder, which differs from the graph $CL_n$ in two edges (in denotations introduced in item~2.4, edges $w_nw_1$ and $v_nv_1$ in the graph $CL_n$ are replaced with edges $w_nv_1$ and $v_nw_1$). Moreover, Ivanov proves that in the class of simple cubic graphs~$G$ with $2n$ edges ($n>2$) the maximal value of $\chi'_3(G)$ with even $n$ is attained for $G=CL_n$, while with odd $n$ it does for $G=M_{2n}$.
These graphs are bipartite; moreover, the latter one is non-planar. This property is much more informative than the estimate obtained by us in Corollary~\ref{concl}.


\begin{thebibliography}{10}
\bibitem{heawood}
P.~J.~Heawood, {\it On the four-colour map theorem.} --- Quart. J. Pure Appl. Math. {\bf 29} (1898) 270--285.
\bibitem{Ore}
O.~Ore, {\it The Four-Color Problem.} Academic Press, New York and London, 1967. 
\bibitem{belaga1976}
E.~G.~Belaga, {\it On Heawood vectors of pseudotriangulations.} --- Soviet Math. Dokl. {\bf 17}:6 (1976) 1494--1498.
\bibitem{belaga1999}
E.~G.~Belaga, {\it Mod 3 arithmetic on triangulated Riemann surfaces.} --- Theoretical Computer Science {\bf 263} (2001) 123--137.
\bibitem{DiscrAn} 
Yu.~V.~Matiyasevich, {\it A criterion for vertex colorability of a graph stated in terms of edge orientations.} --- Diskretnyi Analiz, issue 26 (1974)
65--71 (in Russian), \url{https://arxiv.org/abs/0712.1884} (in English).
\bibitem{CombAs} 
Yu.~V.~Matiyasevich, {\it Problem~21.} --- Combinatorial Asymptotical Analisys {\bf 2}, Krasnoyarskii State University, Krasnoyarsk (1977) 178--179
(in Russian).
\bibitem{POMI} 
Yu.~V.~Matiyasevich, {\it Some algebraic methods for calculating the number of colorings of a graph.} --- Journal of Mathematical Sciences {\bf 121} (2004) 2401--2408.
\bibitem{karpov}
D.~V.~Karpov, {\it On Proper Edge 3-Colorings of a Cubic Graph.} --- Journal of Mathematical Science {\bf 255} (2021) 17--27.
\bibitem{diestel}
R.~Diestel, {\it Graph Theory.} Springer, 2024.
\bibitem{tutte1953}
W.~T.~Tutte, {\it A contribution to the theory of chromatic polynomials} --- Can. J. Math. {\bf 6} (1953) 80--91. 
\bibitem{ejc}
A.~P.~Kuptsov, E.~Yu.~Lerner, S.~A.~Mukhamedjanova, {\it Flow polynomials as Feynman amplitudes and their $\alpha$-representation} --- Electron. J. Combin. {\bf 24} (2017) no.~1, 19 pp.
\bibitem{ivanov}
M.~P.~Ivanov, {\it An exact bound on the number of proper 3-edge-colorings of a connected cubic graph.} ---
Journal of Mathematical Sciences {\bf 275} (2023) 130--146.
\end{thebibliography}
\end{document}